\newtheorem{theorem}{Theorem}
\newtheorem*{theorem*}{Main Theorem}
\theoremstyle{definition}
\theoremstyle{remark}
\DeclareMathSymbol{\widehatsym}{\mathord}{largesymbols}{"62}
\renewcommand{\rho}{\varrho}
\newcommand{\fixed}[2][1]{%
  \begingroup
  \spaceskip=#1\fontdimen2\font minus \fontdimen4\font
  \xspaceskip=0pt\relax
  #2%
  \endgroup
}
\begin{document}
\title{Commutativity of integral quasi-arithmetic \\ means on measure spaces}

\author{Dorota G\l azowska}
\address{Faculty of Mathematics, Computer Science and Econometrics, University of Zielona G\'ora -- prof. Z. Szafrana 4a, PL-65516 Zielona G\'ora}
\email{d.glazowska@wmie.uz.zgora.pl}

\author{Paolo Leonetti}
\address{Department of Statistics, Universit\`a ``Luigi Bocconi'' -- via Roentgen 1, IT-20136 Milano}
\email{leonetti.paolo@gmail.com}
\urladdr{\url{https://sites.google.com/site/leonettipaolo/}}

\author{Janusz Matkowski}
\address{Faculty of Mathematics, Computer Science and Econometrics, University of Zielona G\'ora -- prof. Z. Szafrana 4a, PL-65516 Zielona G\'ora}
\email{j.matkowski@wmie.uz.zgora.pl}

\author{Salvatore Tringali}
\address{Institute for Mathematics and Scientific Computing, University of Graz -- Heinrichstr. 36, AT-8010 Graz}
\email{salvatore.tringali@uni-graz.at}
\urladdr{\url{http://imsc.uni-graz.at/tringali}}

\subjclass[2010]{Primary 26E60, 39B22, 39B52; Secondary 28E99, 60B99, 91B99.}
%
%

\keywords{Functional equations, commuting mappings, generalized [quasi-arithmetic] means.}

\begin{abstract}
Let $(X, \mathscr{L}, \lambda)$ and $(Y, \mathscr{M}, \mu)$ be finite measure spaces for which there exist $A \in \mathscr{L}$ and $B \in \mathscr{M}$ with $0 < \lambda(A) < \lambda(X)$ and $0 < \mu(B) < \mu(Y)$, and let $I\subseteq \mathbf{R}$ be a non-empty interval. We prove that, if $f$ and $g$ are cont\-in\-u\-ous bijections $I \to \mathbf{R}^+$, then the equation
$$
f^{-1}\!\left(\int_X f\!\left(g^{-1}\!\left(\int_Y g \circ h\;d\mu\right)\right)d \lambda\right)\! = g^{-1}\!\left(\int_Y g\!\left(f^{-1}\!\left(\int_X f \circ h\;d\lambda\right)\right)d \mu\right)
$$
is satisfied by every $\mathscr{L} \otimes \mathscr{M}$-measurable simple function $h: X \times Y \to I$ if and only if $f=c \fixed[0.15]{\text{ }} g$ for some $c \in \mathbf{R}^+$ (it is easy to see that the equation is well posed).
An analogous, but es\-sen\-tial\-ly different, result, with $f$ and $g$ replaced by con\-tin\-u\-ous injections $I \to \mathbf R$ and $\lambda(X)=\mu(Y)=1$, was re\-cent\-ly obtained in [Indag. Math. \textbf{27} (2016), 945--953].
\end{abstract}
\maketitle
\thispagestyle{empty}
\section{Introduction}
\label{sec:intro}
Let $(X,\mathscr{L},\lambda)$ and $(Y,\mathscr{M},\mu)$ be measure spaces, and $f$ and $g$ be real-valued continuous in\-jec\-tions defined on a non-empty interval $I\subseteq \mathbf{R}$ (which may be bounded or unbounded, and need not be open or closed). In this note, we examine conditions under which the equation
\begin{equation}\label{eq:mainequation}
f^{-1}\!\left(\int_X f\!\left(g^{-1}\!\left(\int_Y g \circ h\;d\mu\right)\right)d \lambda\right)\! = g^{-1}\!\left(\int_Y g\!\left(f^{-1}\!\left(\int_X f \circ h\;d\lambda\right)\right)d \mu\right)\!
\end{equation}
is satisfied by every $h$ in a suitable class of $\mathscr{L} \otimes \mathscr{M}$-measurable functions $X\times Y\rightarrow I$, taking $f$ and $g$ as unknowns and assuming the equation is well posed (notations and terminology, if not explained, are standard or should be clear from the context).

When $(X,\mathscr{L},\lambda)$ and $(Y,\mathscr{M},\mu)$ are probability spaces, the left- and right-hand side of \eqref{eq:mainequation} can be interpreted as ``partially mixed'' integral quasi-arithmetic means.
The interest in functional equations involving generalized means dates back at least to G.~Aumann \cite{Auma} and has been a subject of extensive research, see, e.g.,
\cite{Dar},
\cite{Kalig}, \cite{Matk12, Matk05}, and references therein.

In particular, \eqref{eq:mainequation} is naturally related to the vast literature on permutable mappings \cite{Ritt}, and is motivated by the study of certainty equivalences, a notion first introduced by S.~H.~Chew \cite{Chew} in connection to the theory of expected utility and decision making under uncertainty, see \cite{MMR} and \cite{Str} for current trends in the area.

The equation was recently addressed in \cite{LMT16}, where it was observed, among other things, that \eqref{eq:mainequation} is well posed if $(X,\mathscr{L},\lambda)$ and $(Y,\mathscr{M},\mu)$ are probability spaces and $h(X\times Y) \Subset I$ for every ``test function'' $h$, see \cite[Proposition 2]{LMT16} (``$\Subset$'' means, as usual, ``contained in a compact subset of'').
It follows that, if $(X,\mathscr{L},\lambda)$ and $(Y,\mathscr{M},\mu)$ are probability spaces, then both the left- and the right-hand side of \eqref{eq:mainequation} is well defined provided that $h: X\times Y\rightarrow I$ is an $\mathscr{L}\otimes \mathscr{M}$-measurable simple function, namely, $h = \sum_{i=1}^n \alpha_i \bm{1}_{E_i}$, where $\alpha_1,\ldots,\alpha_n \in I$ and $E_1,\ldots,E_n \in \mathscr{L}\otimes \mathscr{M}$ are disjoint sets such that $E_1\cup \cdots \cup E_n=X\times Y$.

With this in mind, we call a measure space $(S,\mathscr{C},\gamma)$ \emph{non-degenerate} if there exists $A \in \mathscr{C}$ with $0 < \gamma(A) < \gamma(S)$.
Here, then, comes the main theorem of \cite{LMT16}, which was stated in that paper under the assumption that \eqref{eq:mainequation} is satisfied for all $\mathscr{L}\otimes \mathscr{M}$-measurable functions $h:X\times Y \to I$ for which $h(X \times Y) \Subset I$, but is actually true, as is transparent from its proof, in the following (more general) form.
\begin{theorem}\label{th:mainold}
Let $(X,\mathscr{L},\lambda)$ and $(Y,\mathscr{M},\mu)$ be non-degenerate probability spaces, and $f,g: I\to \mathbf{R}$ be continuous injections. Then equation \eqref{eq:mainequation} is satisfied by every $\mathscr{L}\otimes \mathscr{M}$-measurable simple func\-tion $h:X\times Y \to I$ if and only if $f=ag+b$ for some $a,b \in \mathbf{R}$ with $a\neq 0$.
\end{theorem}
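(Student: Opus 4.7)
The plan is as follows. Sufficiency is immediate: if $f = ag+b$ with $a \neq 0$, then $f(g^{-1}(z)) = az+b$, and two applications of Fubini's theorem collapse both sides of \eqref{eq:mainequation} to $g^{-1}\!\bigl(\int_{X\times Y} g\circ h\, d(\lambda \otimes \mu)\bigr)$, so \eqref{eq:mainequation} holds for every integrable $h$, in particular every simple one.

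For the converse, I would exploit non-degeneracy to pick $A \in \mathscr{L}$ and $B \in \mathscr{M}$ with $p := \lambda(A) \in (0,1)$ and $q := \mu(B) \in (0,1)$. For arbitrary $\alpha,\beta,\gamma,\delta \in I$, substituting the four-level test function
\[
h \;=\; \alpha\,\bm{1}_{A\times B} + \beta\,\bm{1}_{A\times B^c} + \gamma\,\bm{1}_{A^c\times B} + \delta\,\bm{1}_{A^c\times B^c}
\]
into \eqref{eq:mainequation} and computing each side directly yields the commutativity-of-means identity
\[
M_f^p\!\bigl(M_g^q(\alpha,\beta),\, M_g^q(\gamma,\delta)\bigr) \;=\; M_g^q\!\bigl(M_f^p(\alpha,\gamma),\, M_f^p(\beta,\delta)\bigr),
\]
where $M_\phi^r(x,y) := \phi^{-1}(r\phi(x)+(1-r)\phi(y))$ is the weighted quasi-arithmetic mean generated by $\phi$.

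Setting $\phi := f\circ g^{-1}$ on $J := g(I)$ and $w(a,b) := \phi^{-1}(p\phi(a)+(1-p)\phi(b))$, and writing $s = g(\alpha)$, $t = g(\beta)$, $u = g(\gamma)$, $v = g(\delta)$, the identity above rearranges to the bi-Jensen equation
\[
w\bigl(qs+(1-q)t,\, qu+(1-q)v\bigr) \;=\; q\,w(s,u) + (1-q)\,w(t,v),\qquad s,t,u,v \in J.
\]
Taking $u = v$ (resp.\ $s = t$) shows that $w$ is $q$-Jensen-affine in each variable; by continuity of $w$ and the classical theorem that continuous $q$-Jensen-affine functions on an interval are affine, $w$ is separately affine, hence $w(a,b) = \kappa ab + \mu a + \nu b + \rho$ for some constants. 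The mean identity $w(a,a) = a$ forces $\kappa = 0$, $\mu + \nu = 1$ and $\rho = 0$, so $w(a,b) = \theta a + (1-\theta) b$ for some $\theta \in (0,1)$ (since $w$ takes values between its arguments). Therefore $\phi(\theta a + (1-\theta) b) = p\,\phi(a) + (1-p)\,\phi(b)$ for all $a,b \in J$.

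Finally, after shifting so that $0 \in J$ and subtracting a constant from $\phi$ so that $\phi(0) = 0$, setting $b = 0$ and $a = 0$ separately gives $\phi(\theta a) = p\phi(a)$ and $\phi((1-\theta) b) = (1-p)\phi(b)$; substituting back yields $\phi(\theta a + (1-\theta) b) = \phi(\theta a) + \phi((1-\theta) b)$, and the change of variables $x := \theta a$, $y := (1-\theta) b$ reduces this to the additive Cauchy equation $\phi(x+y) = \phi(x) + \phi(y)$ on a neighbourhood of $0$. Continuity forces $\phi$ to be linear there, and the multiplicative identities $\phi(\theta a) = p\phi(a)$ and $\phi((1-\theta) b) = (1-p)\phi(b)$ then propagate linearity to all of $J$ (and, en route, force $\theta = p$). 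Hence $\phi = f\circ g^{-1}$ is affine on $J$, so $f = ag + b$ on $I$ with $a \neq 0$ by injectivity of $f$. I expect the main obstacle to be the careful bookkeeping of domains in this last step --- verifying that Cauchy's equation genuinely holds on a neighbourhood of $0$ after the shift and that linearity propagates to the whole interval $J$ --- together with the rigorous invocation of the Jensen-to-affine principle under only continuity.
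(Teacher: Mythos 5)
Your proposal is correct, but note first that this paper does not actually prove Theorem~\ref{th:mainold}: the statement is quoted from \cite{LMT16}, and the only proof written out here is that of the analogous Theorem~\ref{th:mainnew}. Measured against that proof, your opening moves coincide with the paper's: the four-block test function $\alpha\,\bm{1}_{A\times B}+\beta\,\bm{1}_{A\times B^c}+\gamma\,\bm{1}_{A^c\times B}+\delta\,\bm{1}_{A^c\times B^c}$, and the reduction via $\varphi=f\circ g^{-1}$ to the bi-Jensen identity \eqref{eq:main3} for the two-variable map $\Phi$ (your $w$). The two arguments then genuinely part ways. The paper's proof of Theorem~\ref{th:mainnew} leans on the hypothesis that $\varphi$ is a bijection onto $\mathbf{R}^+$: it shows $\Phi$ is strictly monotone and surjective, deduces $\Phi(\mathbf{x})\to 0$ as $\mathbf{x}\to\boldsymbol{0}$, upgrades \eqref{eq:main3} to full additivity of $\Phi$, and finishes with the two-variable Cauchy equation on the semigroup $\mathbf{R}^+\times\mathbf{R}^+$. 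That route is unavailable under the hypotheses of Theorem~\ref{th:mainold} (no surjectivity is assumed), and conversely your route exploits two features special to the probability case: the normalization $w(a,a)=a$, which fails when the weights do not sum to $1$, and the fact that the weights $q,1-q$ are complementary, which lets you solve the bi-Jensen equation by separate $t$-affinity in each variable. Your remaining steps --- separate affinity giving $w(a,b)=\kappa ab+\mu a+\nu b+\rho$, the normalization killing $\kappa$ and $\rho$, internality of the mean forcing $\theta\in(0,1)$, and the restricted-domain Cauchy equation near $0$ with linearity propagated outward through $\phi(\theta a)=p\phi(a)$ (which also forces $\theta=p$, since $\phi\not\equiv 0$ near $0$ by injectivity) --- are all sound, provided you invoke precisely the two classical ingredients you allude to: that a continuous solution of $F(qx+(1-q)y)=qF(x)+(1-q)F(y)$ for a single fixed $q\in(0,1)$ on an interval is affine, and that a continuous solution of Cauchy's equation on a neighbourhood of the origin is linear there (both available from \cite{Kucz}). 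In short: a correct proof, identical to the paper's template in its first half and genuinely different from the method the paper uses for its own Theorem~\ref{th:mainnew} in its second half; your method buys independence from surjectivity, the paper's buys independence from the weights summing to one.
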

Now we may ask what happens if $(X,\mathscr{L},\lambda)$ and $(Y,\mathscr{M},\mu)$ are not probability spaces, and in the next section we give a partial answer to this question.
\section{Main result}
It is easy to check (we omit details) that \eqref{eq:mainequation} is still well posed if $(X,\mathscr{L},\lambda)$ and $(Y,\mathscr{M},\mu)$ are non-degenerate finite measure spaces, and $f$ and $g$ are con\-tin\-u\-ous bijections $I\to \mathbf{R}^+$ (throughout, $\mathbf R^+$ denotes the set of positive reals and $\mathbf N^+$ the set of positive integers).
Accordingly, we have the following analogue of Theorem \ref{th:mainold}.
\begin{theorem}\label{th:mainnew}
Let $(X,\mathscr{L},\lambda)$ and $(Y,\mathscr{M},\mu)$ be non-degenerate finite measure spaces, and $f,g: I\to \mathbf{R}^+$ be continuous bijections, where $I \subseteq \mathbf R$ is a \textup{(}necessarily open\textup{)} interval. Then equation \eqref{eq:mainequation} is satisfied by every $\mathscr{L}\otimes \mathscr{M}$-measurable simple function $h:X\times Y \to I$ if and only if $f=c \fixed[0.15]{\text{ }}g$ for some $c \in \mathbf{R}^+$.
\end{theorem}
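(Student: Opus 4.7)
The ``if'' direction follows from Fubini's theorem: when $f = cg$, both sides of \eqref{eq:mainequation} collapse to $g^{-1}\!\left(\int_{X \times Y} g \circ h \, d(\lambda \otimes \mu)\right)$. For the ``only if'' direction, I fix sets $A \in \mathscr{L}$ and $B \in \mathscr{M}$ satisfying the non-degeneracy conditions, write $a := \lambda(A)$, $a' := \lambda(X) - a$, $b := \mu(B)$, $b' := \mu(Y) - b$, and introduce the continuous bijection $\phi := g \circ f^{-1}: \mathbf{R}^+ \to \mathbf{R}^+$ together with the auxiliary function $P(s, t) := \phi^{-1}(b\phi(s) + b'\phi(t))$ on $(\mathbf{R}^+)^2$. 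Specializing \eqref{eq:mainequation} to the four-valued simple functions
$$
h = x_{11}\, \bm{1}_{A \times B} + x_{12}\, \bm{1}_{A \times B^c} + x_{21}\, \bm{1}_{A^c \times B} + x_{22}\, \bm{1}_{A^c \times B^c} \qquad (x_{ij} \in I),
$$
and setting $u_1 := f(x_{11})$, $u_2 := f(x_{12})$, $u_3 := f(x_{21})$, $u_4 := f(x_{22})$, a direct computation (applying $f$ to both sides) reduces \eqref{eq:mainequation} to the bi-symmetry-type identity
$$
a P(u_1, u_2) + a' P(u_3, u_4) = P(a u_1 + a' u_3,\; a u_2 + a' u_4),
$$
which must hold for all $u_1, u_2, u_3, u_4 \in \mathbf{R}^+$ because $f$ maps $I$ bijectively onto $\mathbf{R}^+$.

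The main technical step is to extract from this identity the additivity
$$
P(s_1, s_2) + P(t_1, t_2) = P(s_1 + t_1,\; s_2 + t_2) \qquad \text{for all } s_i, t_i \in \mathbf{R}^+.
$$
Strict monotonicity of $\phi$ and bijectivity onto $\mathbf{R}^+$ force $\phi(0^+) \in \{0, +\infty\}$, so in either case $P(u_3, u_4) \to 0$ as $u_3, u_4 \to 0^+$. Passing to this limit in the bi-symmetry identity yields $a P(u_1, u_2) = P(a u_1, a u_2)$; a symmetric argument (sending $u_1, u_2 \to 0^+$) gives $a' P(u_3, u_4) = P(a' u_3, a' u_4)$. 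Substituting these back produces the desired additivity. Continuity of $P$ and standard Cauchy-equation arguments (reducing, by taking limits, to one-variable Cauchy equations in each coordinate) then force $P(s, t) = A s + B t$ for some constants $A, B > 0$, with positivity of $A$ and $B$ following from the injectivity of $\phi$.

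Expanding the definition of $P$, we obtain $\phi(A s + B t) = b \phi(s) + b' \phi(t)$ for all $s, t > 0$. This rules out $\phi$ being decreasing: otherwise letting $t \to 0^+$ would drive the right-hand side to $+\infty$ while the left-hand side stays bounded. So $\phi$ is increasing with $\phi(0^+) = 0$, and sending $t \to 0^+$ (resp.\ $s \to 0^+$) yields $\phi(A s) = b \phi(s)$ and $\phi(B t) = b' \phi(t)$. Combining these, $\phi(A s + B t) = \phi(A s) + \phi(B t)$; with $u := A s$ and $v := B t$, this is Cauchy's equation $\phi(u + v) = \phi(u) + \phi(v)$ on $\mathbf{R}^+$, so continuity gives $\phi(u) = k u$ for some $k > 0$, i.e., $g(x) = k f(x)$ for every $x \in I$, which is the claimed $f = c g$ with $c := 1/k$. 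The principal obstacle, then, is the extraction of additivity for $P$: it leans crucially on the boundary behavior of $\phi$ at $0^+$, which is precisely where the hypothesis that $f$ and $g$ be bijections onto the \emph{open} half-line $\mathbf{R}^+$ — rather than merely injections into $\mathbf{R}$, as in Theorem~\ref{th:mainold} — enters the argument, and it also clarifies why the non-normalized setting here enforces the purely multiplicative conclusion $f = c g$ rather than the affine one of Theorem~\ref{th:mainold}.
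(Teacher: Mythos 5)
Your proposal is correct and follows essentially the same route as the paper: the paper works with $\varphi := f\circ g^{-1}$ and $\Phi(x,y):=\varphi^{-1}(\alpha_1\varphi(x)+\alpha_2\varphi(y))$ (the mirror image of your $\phi$ and $P$), derives the same bisymmetry identity from the four-block simple functions, shows $\lim_{\mathbf{x}\to\mathbf{0}}\Phi(\mathbf{x})=0$ to get additivity and hence linearity of $\Phi$, and then runs the same limit argument a second time on $\varphi$ itself to conclude $\varphi(x)=cx$. The only cosmetic differences are that the paper deduces the vanishing limit at the origin from surjectivity and monotonicity of $\Phi$ rather than directly from the boundary behaviour of $\varphi$ at $0^+$ as you do, and that it cites Kuczma's extension theorems for the Cauchy-equation steps which you sketch by coordinatewise limits.
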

\begin{proof}
The ``if'' part follows by Fubini's theorem (viz., \cite[Theorem 3.4.4]{Bogac}) and the fact that, if $(S,\mathscr{C},\gamma)$ is a measure space, $w$ a continuous bijection $I\to \mathbf{R}^+$, and $h:S\to I$ a $\mathscr{C}$-measurable function such that $w\circ h$ is $\gamma$-integrable, then
$$
w^{-1}\fixed[-0.6]{\text{ }}\left(\int_S w\circ h\;d\gamma\right)=(\alpha w)^{-1}\fixed[-0.6]{\text{ }}\left(\int_S (\alpha w)\circ h\;d\gamma\right)
$$
for every $\alpha \in \mathbf{R}^+$ (we omit details,
cf. \cite[Proposition 3]{LMT16} for the case of probability spaces).

As for the ``only if'' part, set $D := \mathbf{R}^+\fixed[-0.3]{\text{ }}\times \mathbf{R}^+$. By hypothesis, there are determined $A \in \mathscr{L}$ and $B \in \mathscr{M}$ such that $\alpha_1:=\lambda(A)$, $\alpha_2:=\lambda(A^c)$, $\beta_1:=\mu(B)$, and $\beta_2:=\mu(B^c)$ belong to $\mathbf{R}^+$, where $A^c := X \setminus A$ and $B^c := Y \setminus B$. Hence, for all $x,y,z,w\in I$ the function
\begin{equation}
\label{equ:expression-of-h-as-sum-of-simple-fncts}
h=x\bm{1}_{A\times B}+y\bm{1}_{A\times B^c}+z\bm{1}_{A^c\times B} + w\bm{1}_{A^c\times B^c}
\end{equation}
is an $\mathscr{L}\otimes \mathscr{M}$-measurable simple function $X\times Y\to I$, so we can plug \eqref{equ:expression-of-h-as-sum-of-simple-fncts} into \eqref{eq:mainequation} and obtain
\begin{equation}
\label{equ:main1}
\begin{split}
f^{-1} & (\alpha_1 f(g^{-1}(\beta_1\fixed[0.2]{\text{ }} g(x)+\beta_2\fixed[0.2]{\text{ }} g(y))) + \alpha_2 f(g^{-1}(\beta_1 \fixed[0.2]{\text{ }}g(z)+\beta_2\fixed[0.2]{\text{ }}g(w)))) \\
& = g^{-1}(\beta_1\fixed[0.2]{\text{ }} g(f^{-1}(\alpha_1 f(x)+\alpha_2f(z))) + \beta_2\fixed[0.2]{\text{ }} g(f^{-1}(\alpha_1 f(y)+\alpha_2f(w)))).
\end{split}
\end{equation}
Set $\varphi:=f\circ g^{-1}$ on $\mathbf{R}^+$. Of course, $\varphi$ is a continuous bijection on $\mathbf R^+$, and we derive from \eqref{equ:main1}, through the change of variables $x \mapsto g^{-1}(s)$, $y \to g^{-1}(t)$, $z \mapsto g^{-1}(u)$, and $w \mapsto g^{-1}(v)$, that
\begin{equation}
\label{eq:main2}
\begin{split}
\varphi^{-1} (\alpha_1\fixed[0.2]{\text{ }}\varphi(\beta_1\fixed[0.2]{\text{ }} s&+\beta_2\fixed[0.2]{\text{ }}t) + \alpha_2\fixed[0.2]{\text{ }}\varphi(\beta_1\fixed[0.2]{\text{ }} u+\beta_2\fixed[0.2]{\text{ }}v)) \\
             & = \beta_1 \fixed[0.2]{\text{ }} \varphi^{-1}(\alpha_1 \fixed[0.2]{\text{ }}\varphi(s) + \alpha_2\fixed[0.2]{\text{ }}\varphi(u)) + \beta_2\fixed[0.2]{\text{ }} \varphi^{-1}(\alpha_1 \fixed[0.2]{\text{ }}\varphi(t) + \alpha_2\fixed[0.2]{\text{ }} \varphi(v))
\end{split}
\end{equation}
for every $s,t,u,v \in g(I)=\mathbf{R}^+$. Moreover, if we take $\Phi$ to be the function
\begin{equation}
\label{equ:def-of-Phi}
D \to \mathbf{R}^+: (x,y) \mapsto \varphi^{-1}(\alpha_1 \fixed[0.2]{\text{ }}\varphi(x) + \alpha_2\fixed[0.2]{\text{ }}\varphi(y)),
\end{equation}
then \eqref{eq:main2} can be conveniently rewritten as
\begin{equation}\label{eq:main3}
\Phi(\beta_1 \mathbf{x}+\beta_2 \mathbf{y})= \beta_1 \Phi(\mathbf{x})+\beta_2 \Phi(\mathbf{y}),\text{ for all }\mathbf{x}, \mathbf{y} \in D.
\end{equation}
Let $\preceq$ be the product order on $\mathbf{R} \times \mathbf{R}$ induced by the usual order on $\mathbf R$, and note that \begin{equation}
\label{equ:monotonicity-of-Phi}
\Phi(\mathbf{x})<\Phi(\mathbf{y}), \text{ for all distinct }\mathbf{x}, \mathbf{y} \in D\text{ with }\mathbf{x}\preceq \mathbf{y}.
\end{equation}
Indeed, $\varphi$ being a continuous bijection on $\mathbf R^+$ entails that $\varphi$ is strictly monotone. So, assume $\varphi$ is strictly increasing (respectively, strictly decreasing), and let $x,y, z,w \in \mathbf{R}^+$ be such that $x \le z$, $y \le w$, and $(x, y) \ne (z, w)$. Then
$$
\alpha_1\varphi(x)+\alpha_2\varphi(y)<\alpha_1\varphi(z)+\alpha_2\varphi(w) \quad(\text{respectively, } \alpha_1\varphi(x)+\alpha_2\varphi(y)>\alpha_1\varphi(z)+\alpha_2\varphi(w)),
$$
and since $\varphi$ is strictly increasing (respectively, decreasing) if and only if so is $\varphi^{-1}$, we conclude that
$\Phi(x,y)<\Phi(z,w)$, which is what we wanted to prove.

On the other hand, it is straightforward to check that $\Phi$ is surjective. Indeed, pick $z \in \mathbf R^+$. By the surjectivity of $\varphi$, there exist $x, y \in \mathbf R^+$ such that $\alpha_1\varphi(x) = \alpha_2 \varphi(y) = \frac{1}{2} \varphi(z) > 0$, viz., $\alpha_1\varphi(x) + \alpha_2 \varphi(y) = \varphi(z)$, which, by \eqref{equ:def-of-Phi}, is equivalent to $\Phi(x,y) = z$.

With this said, set $\xi_n := \Phi(1/n, 1/n)$ for every $n \in \mathbf N^+$. By \eqref{equ:monotonicity-of-Phi}, $(\xi_n)_{n \ge 1}$ is a strictly de\-creas\-ing sequence of positive reals. Hence, the limit of $\xi_n$ as $n \to \infty$ exists, and is non-negative and equal to $\xi := \inf_{n \ge 1} \xi_n$. Suppose for a contradiction that $\xi > 0$. Then, we infer from the surjectivity of $\Phi$ that $\xi = \Phi(\bar{x},\bar{y})$ for some $\bar{x}, \bar{y} \in \mathbf R^+$, which is, however, impossible, because $\frac{1}{n} < \min(\bar{x},\bar{y})$, and hence, by \eqref{equ:monotonicity-of-Phi}, $\xi_n < \xi$, for all sufficiently large $n \in \mathbf N^+$.

So, using that a local base at $\boldsymbol{0} := (0,0)$ (in the usual topology of $\mathbf{R}^2$) is given by the squares of the form $[-1/n, 1/n] \times [-1/n, 1/n]$ with $n \in \mathbf N^+$, it follows from the above that
\begin{equation*}\label{eq:limitphi}
\lim_{\mathbf{x} \to \mathbf 0}\Phi(\mathbf{x}) = 0.
\end{equation*}
By letting $\mathbf{x} \to \boldsymbol{0}$ (respectively, $\mathbf{y} \to \boldsymbol{0})$ in \eqref{eq:main3}, we therefore find that
\begin{equation*}\label{eq:main4}
\Phi(\beta_1\mathbf{x})=\beta_1\Phi(\mathbf{x})\ \text{ and }\ \Phi(\beta_2\mathbf{y})=\beta_2\Phi(\mathbf{y}), \text{ for all }\mathbf{x}, \mathbf{y} \in D.
\end{equation*}
Together with \eqref{eq:main3}, this in turn implies that
\begin{equation}
\label{equ:additivity-of-Phi}
\Phi(\mathbf{x}+\mathbf{y})=\Phi(\mathbf{x})+\Phi(\mathbf{y}),\text{ for all }\mathbf{x}, \mathbf{y} \in D.
\end{equation}
But $D$ is a subsemigroup of the group $(\mathbf R^2, +)$ with $\mathbf R^2 = D - D := \{x-y: x, y \in D\}$ and $\Phi$ is continuous, so we get from \eqref{equ:additivity-of-Phi} and \cite[Theorems 5.5.2 and 18.2.1]{Kucz} that there exist $a,b \in \mathbf{R}$ such that
$\Phi(x,y)=ax+by$ for all $x,y \in \mathbf{R}^+$, and actually, it is immediate that $a, b \ge 0$ and $a+b \ne 0$, since $\Phi$ is a positive function. In addition, we derive from \eqref{equ:def-of-Phi} that
\begin{equation}\label{eq:main5}
\alpha_1 \varphi(x)+\alpha_2\varphi(y)=\varphi(ax+by), \text{ for all }x,y \in \mathbf{R}^+.
\end{equation}
Now, we have already observed that $\varphi$ is strictly monotone. Suppose for a contradiction that $\varphi$ is strictly decreasing. Then, $\varphi$ being a bijection of $\mathbf{R}^+$ gives that $\varphi(z) \to 0^+$ as $z \to \infty$, and assuming $a \neq 0$ (the case when $b \ne 0$ is similar), this implies by \eqref{eq:main5} that
$$
0 < \alpha_2 \varphi(1) =\varphi(ax+b)-\alpha_1 \varphi(x) < \varphi(ax+b) \le \varphi(ax),
$$
which is, however, impossible in the limit as $x$ goes to $\infty$.

Thus, $\varphi$ is a strictly increasing continuous bijection of $\mathbf{R}^+$, and hence $\varphi(z) \to 0^+$ as $z \to 0$. Taking $\varphi(0) := 0$ and letting $x\to 0$ (respectively, $y\to 0$) in \eqref{eq:main5}, we can therefore conclude that
$$
\alpha_2\varphi(y)=\varphi(by) \ \text{ and }\ \alpha_1\varphi(x)=\varphi(ax),\text{ for all }x,y \in \mathbf{R}^+.
$$
It follows $a, b \in \mathbf R^+$, and in combination with \eqref{eq:main5}, this yields
$$
\varphi(x+y)=\varphi(x)+\varphi(y),\text{ for all }x,y \in \mathbf{R}^+.
$$
So, considering that $\varphi$ is con\-tin\-u\-ous and applying \cite[Theorems 5.5.2 and 18.2.1]{Kucz} to the function $\mathbf R^+ \fixed[-0.3]{\text{ }} \times \mathbf R \to \mathbf R: (x,y) \mapsto \varphi(x)$ shows that there is a constant $c \in \mathbf{R}^+$ such that $\varphi(x)=c\fixed[0.25]{\text{ }} x$ for all $x \in \mathbf R^+$, which is equivalent to $f=c\fixed[0.15]{\text{ }} g$.
\end{proof}


\section*{Acknowledgments}
P.L. was supported by a PhD scholarship from Universit\`a ``Luigi Bocconi'', and S.T. by the Austrian Science Fund (FWF), Project No. M 1900-N39.


\end{document}